\newtheorem{theorem}{Theorem}
\newtheorem{remark}{Remark}
\numberwithin{theorem}{section}
\newtheoremstyle{case}{}{}{}{}{}{:}{ }{}
\theoremstyle{case}
\newtheorem{case}{Case}
\newtheoremstyle{subcase}{}{}{}{}{}{:}{ }{}
\theoremstyle{subcase}
\newtheorem{subcase}{Subcase}
\begin{document}

\begin{frontmatter}

\title{Direct and Inverse Theorems on Signed Sumsets of Integers}

\author[mymainaddress]{Ram Krishna Pandey\corref{myauthor}}
\cortext[myauthor]{Corresponding author}
\ead{ramkpandey@gmail.com}

\author[mymainaddress]{Jagannath Bhanja\corref{mycorrespondingauthor}}
\cortext[mycorrespondingauthor]{Research supported by the Ministry of Human Resource Development, India}
\ead{jagannathbhanja74@gmail.com}

\address[mymainaddress]{Department of Mathematics, Indian Institute of Technology Roorkee, Uttarakhand, 247667, India}

\begin{abstract}
Let $G$ be an additive abelian group and $h$ be a positive integer. For a nonempty finite subset $A=\{a_0, a_1,\ldots, a_{k-1}\}$ of $G$, we let
\[h_{\underline{+}}A:=\{\Sigma_{i=0}^{k-1}\lambda_{i} a_{i}: (\lambda_{0}, \ldots, \lambda_{k-1}) \in \mathbb{Z}^{k},~ \Sigma_{i=0}^{k-1}|\lambda_{i}|=h \},\]
be the {\it signed sumset} of $A$.

The {\it direct problem} for the signed sumset $h_{\underline{+}}A$ is to find a nontrivial lower bound for $|h_{\underline{+}}A|$ in terms of $|A|$. The {\it inverse problem} for $h_{\underline{+}}A$ is to determine the structure of the finite set $A$ for which $|h_{\underline{+}}A|$ is minimal. In this article, we solve both the direct and inverse problems for $|h_{\underline{+}}A|$, when $A$ is a finite set of integers.
\end{abstract}

\begin{keyword} sumset; signed sumset; direct and inverse problems 
\MSC[2010] 11P70, 11B75
\end{keyword}

\end{frontmatter}


\section{Introduction}\label{intro}

Let $G$ be an additive abelian group and $A=\{a_0, a_1,\ldots, a_{k-1}\}$ be a nonempty finite subset of $G$. Let $h$ be a positive integer. The $h$-fold sumset $hA$ of $A$ is the set of all sums of $h$ elements of $A$, that is,
\[hA=\{\Sigma_{i=0}^{k-1}\lambda_{i} a_{i}: (\lambda_{0}, \ldots, \lambda_{k-1}) \in \mathbb{N}_{0}^{k}, \Sigma_{i=0}^{k-1}\lambda_{i}=h \}.\]
The $h$-fold signed sumset of $A$, denoted by $h_{\underline{+}}A$, is defined by
\[h_{\underline{+}}A:=\{\Sigma_{i=0}^{k-1}\lambda_{i} a_{i}: (\lambda_{0}, \ldots, \lambda_{k-1}) \in \mathbb{Z}^{k}, \Sigma_{i=0}^{k-1}|\lambda_{i}|=h \}.\]
Clearly,
\begin{equation}
hA\cup h(-A) \subseteq h_{\underline{+}}A \subseteq h\left(A\cup -A\right)\nonumber,
\end{equation}
and for any integer $\alpha$,
\begin{equation}
h_{\underline{+}}(\alpha *A)=\alpha * (h_{\underline{+}}A),\nonumber
\end{equation}
where \[\alpha*A=\{\alpha*a:~a\in A\},\]
is the $\alpha$-dilation of the set $A$.

The study of sumsets and hence of multiple fold sumsets of sets of an additive abelian group has more than two-hundred-year old history. The sumsets are actually the foundations of the ``additive number theory". A paper of Cauchy \cite{cauchy} in 1813, which is believed to be one of the oldest and classical work off-course, finds the minimum cardinality of the sumset $A+B$, where $A$ and $B$ are nonempty subsets of residue classes modulo a prime. Later, Davenport \cite{dav} rediscovered Cauchy's result in 1935. The result is now known as the Cauchy-Davenport theorem. Several partial results about the minimum cardinality of the sumsets and its inverse that if the minimum cardinality is achieved, then the characterization of individual sets have been obtained by now. A comprehensive list of references may be found in Mann \cite{mann}, Freiman \cite{freiman}, Nathanson \cite{N96}, and  Tao \cite{tao}. Plagne \cite{plagne} in 2006, finally settled the general case by obtaining the minimum cardinality of sumset in an abelian group.

In contrast to the $h$-fold sumset, the $h$-fold signed sumset has a brief and a quite young history. This sumset first appeared in the work of Bajnok and Ruzsa \cite{BR03} in the context of the ``independence number" of a subset $A$ of $G$ and in the work of Klopsch and Lev \cite{KL03, KL09} in the context of the ``diameter" of $G$ with respect to the subset $A$. The first systematic and point centric study appeared in the work of Bajnok and Matzke \cite{BM15} in which they studied the minimum cardinality of $h$-fold signed sumset $h_{\underline{+}}A$ of subsets of a finite abelian group. In particular, they proved that the minimum cardinality of $h_{\underline{+}}A$ is the same as the minimum cardinality of $hA$, when $A$ is a subset of a finite cyclic group. An year later, they \cite{BM2015} classified all possible values of $k$ for which the minimum cardinality of $h_{\underline{+}}A$ coincide with the minimum cardinality of $hA$, when $A$ is a subset of a particular elementary abelian group.

The direct problem for signed sumset $h_{\underline{+}}A$ is to find a nontrivial lower bound for $|h_{\underline{+}}A|$ in terms of $|A|$. The inverse problem for $h_{\underline{+}}A$ is to determine the structure of the finite set $A$ for which $|h_{\underline{+}}A|$ is minimal. In this article, we study both direct and inverse problems for signed sumset $h_{\underline{+}}A$, when $A$ is a finite set of integers. This study is done in Section \ref{sec2} by considering three different cases, viz.; (i) A contains only positive integers, (ii) A contains positive integers and zero, and (iii) A contains arbitrary integers, in the subsections \ref{subsec2.1}, \ref{subsec2.2}, and \ref{subsec2.3}, respectively. To prove our results, we need the following classical results about $h$-fold sumset $hA$.

\begin{theorem} \cite{N96}\label{thmA}
Let $h\geq 2$ and let $A$ be a finite set of integers with $|A|=k$. Then
\[|hA| \geq hk-h+1.\]
\end{theorem}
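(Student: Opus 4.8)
The plan is to prove the bound by exhibiting an explicit strictly increasing chain of $hk-h+1$ elements inside $hA$, so that these elements are automatically distinct and the lower bound follows at once. First I would relabel the elements of $A$ so that $a_0 < a_1 < \cdots < a_{k-1}$; since $hA$ depends only on $A$ as a set, this costs nothing, and it lets me compare sums by tracking how the coefficients $\lambda_i$ are distributed.

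The key idea is to view an element $\sum_{i=0}^{k-1}\lambda_i a_i$ of $hA$ (with $\lambda_i \ge 0$ and $\sum_i \lambda_i = h$) as a placement of $h$ indistinguishable tokens on the positions $a_0 < \cdots < a_{k-1}$. Starting from the configuration with all $h$ tokens on $a_0$, whose sum is the minimal element $ha_0$, I would repeatedly move a single token one position to the right (from some $a_i$ to $a_{i+1}$). Each such move strictly increases the sum by $a_{i+1}-a_i > 0$, so every configuration encountered yields a strictly larger element of $hA$. Concretely, the chain
\begin{equation*}
ha_0,\ (h-1)a_0+a_1,\ \ldots,\ ha_1,\ (h-1)a_1+a_2,\ \ldots,\ ha_2,\ \ldots,\ ha_{k-1}
\end{equation*}
moves all $h$ tokens from $a_0$ all the way to $a_{k-1}$, one token-step at a time.

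The remaining steps are to count the chain and confirm distinctness. Moving from $ha_j$ to $ha_{j+1}$ requires relocating each of the $h$ tokens one slot, i.e. $h$ single moves, and there are $k-1$ such blocks ($j = 0, 1, \ldots, k-2$), so the chain makes $h(k-1)$ moves and therefore lists $h(k-1)+1 = hk-h+1$ elements. Since consecutive entries differ by a strictly positive amount, the whole chain is strictly increasing, hence the listed elements are pairwise distinct elements of $hA$, giving $|hA| \ge hk-h+1$. (An equivalent route is induction on $k$: deleting $a_{k-1}$ gives $|hA'|\ge h(k-1)-h+1$ by the inductive hypothesis, and the $h$ sums $ja_{k-1}+(h-j)a_{k-2}$ for $j=1,\ldots,h$ all exceed $\max(hA')=ha_{k-2}$, supplying exactly $h$ new elements.)

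I do not expect a genuine obstacle here, as the argument is elementary; the only points requiring care are bookkeeping ones, namely verifying that the chain length is exactly $h(k-1)+1$ rather than $h(k-1)$ or $hk$, and that no intermediate sum is accidentally repeated. Both are guaranteed by the strict monotonicity forced by the ordering $a_0 < \cdots < a_{k-1}$, which is precisely why passing to the ordered labeling at the outset is the crucial simplification.
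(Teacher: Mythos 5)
Your proof is correct: the strictly increasing chain of $hk-h+1$ sums obtained by moving one token at a time from $a_i$ to $a_{i+1}$ is precisely the standard argument for this classical bound, and indeed the paper exhibits exactly the same chain as (\ref{eqn1}) (there adapted to the signed sumset) in its proof of Theorem \ref{thm1}, while Theorem \ref{thmA} itself is only cited from Nathanson rather than proved. Both your token-moving chain and your parenthetical induction on $k$ are complete and match this approach, so there is nothing to fix.
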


\begin{theorem} \cite{N96}\label{thmB}
Let $h\geq 2$ and let $A$ be a finite set of integers with $|A|=k$. Then
$|hA|=hk-h+1$ if and only if $A$ is a $k$-term arithmetic progression.
\end{theorem}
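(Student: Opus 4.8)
The plan is to prove the two implications separately, with essentially all the work lying in the ``only if'' direction. The ``if'' direction is a direct computation: if $A=\{a_0+id:0\le i\le k-1\}$ is a $k$-term arithmetic progression with common difference $d$, then every element of $hA$ has the form $ha_0+jd$ with $0\le j\le h(k-1)$, and all $h(k-1)+1$ of these values actually occur, so $|hA|=h(k-1)+1=hk-h+1$. For the converse, I would first normalize: since both $|hA|$ and the property of being an arithmetic progression are invariant under the translation $A\mapsto A+t$ (which sends $hA$ to $hA+ht$), I may assume $0=a_0<a_1<\cdots<a_{k-1}$, so that $jA\subseteq hA$ for every $1\le j\le h$.

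Next I would peel off one fold at a time. The key step is the inequality
\[
|hA|\ \ge\ |(h-1)A|+(k-1),
\]
which I would prove by exhibiting $k-1$ ``new'' elements: the sums $(h-1)a_{k-1}+a_i$ for $i=1,\dots,k-1$ all lie in $hA$, are strictly increasing, and exceed $\max\big((h-1)A\big)=(h-1)a_{k-1}$, while $(h-1)A\subseteq hA$ because $a_0=0$. Telescoping this bound from $|A|=k$ yields $|hA|\ge k+(h-1)(k-1)=hk-h+1$ (an independent proof of Theorem \ref{thmA}), and, crucially, the hypothesis $|hA|=hk-h+1$ forces every link in the chain to be an equality. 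In particular $|jA|=jk-j+1$ for all $1\le j\le h$, and taking $j=2$ reduces the entire problem to the fundamental case $h=2$: it now suffices to show that $|A+A|=2k-1$ implies that $A$ is an arithmetic progression.

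This base case is the main obstacle and carries the genuine combinatorial content. Here I would again use that $A+A$ contains the length-$2k-1$ chain $2a_0<a_0+a_1<\cdots<a_0+a_{k-1}<a_1+a_{k-1}<\cdots<2a_{k-1}$; with $a_0=0$ this says $A\cup(a_{k-1}+A)\subseteq A+A$, and since these two translates meet only in $a_{k-1}$ the left-hand side already has $2k-1$ elements. Hence the hypothesis forces $A+A=A\cup(a_{k-1}+A)$ exactly. The rigidity argument is then to feed the intermediate translate $a_1+A\subseteq A\cup(a_{k-1}+A)$ into this decomposition: sorting the elements $a_1+a_i$ according to whether they fall below $a_{k-1}$ (hence necessarily lie in $A$) or not (hence lie in $a_{k-1}+A$) and comparing indices, one shows inductively that $a_{i+1}-a_i=a_1$ for every $i$, i.e.\ that $A$ is an arithmetic progression with common difference $a_1$. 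Controlling this casework---ruling out the possibility that some element of $a_1+A$ ``skips'' a gap of the chain---is the delicate point; once it is settled, combining it with the reduction of the previous paragraph completes the proof.
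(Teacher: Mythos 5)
Your proposal is correct in substance, but note first that the paper does not prove Theorem \ref{thmB} at all: it is quoted from Nathanson \cite{N96}. The honest comparison is therefore with the classical argument, which the authors do reproduce inside their own inverse theorems (Theorems \ref{thm2}, \ref{thm4}, \ref{thm6}): there one uses the equality hypothesis to conclude that $hA$ is exactly the explicit chain (\ref{eqn1}), then observes that $a_{i-1}+(h-2)a_i+a_{i+1}$ is an element of $hA$ lying strictly between the consecutive chain entries $a_{i-1}+(h-1)a_i$ and $(h-1)a_i+a_{i+1}$, whose only listed element in between is $ha_i$; equating gives $a_{i+1}-a_i=a_i-a_{i-1}$ for all $i$ simultaneously, with no induction on $h$ and no reduction to $h=2$. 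Your route is genuinely different: you reprove Theorem \ref{thmA} via the telescoping bound $|hA|\ge |(h-1)A|+(k-1)$ (valid after normalizing $a_0=0$), use the equality case of the telescope to force $|2A|=2k-1$, and then prove rigidity for doubling. Every step of this is sound, and the one point you flag as delicate closes cleanly by pigeonhole rather than by casework on ``skipping'': once $A+A=A\cup(a_{k-1}+A)$, any representation $a_1+a_i=a_{k-1}+a_j$ with $0\le i\le k-2$ forces $a_j\le a_1+a_{k-2}-a_{k-1}<a_1$, hence $a_j=0$ and $a_1+a_i=a_{k-1}\in A$; thus all $k-1$ numbers $a_1+a_i$, $0\le i\le k-2$, are distinct elements of $A$ exceeding $a_0=0$, so they exhaust $\{a_1,\ldots,a_{k-1}\}$, and monotonicity gives $a_1+a_i=a_{i+1}$ for every $i$, i.e.\ $A$ is an arithmetic progression with difference $a_1$. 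What your approach buys is self-containedness (the direct theorem comes for free) and reduction to the familiar doubling statement; what the sandwich argument of \cite{N96} buys is brevity and uniformity in $h$, which is precisely why it transfers to the signed-sumset setting in Section \ref{sec2}.
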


\section{Direct and inverse theorems for $h_{\underline{+}}A$}\label{sec2}

\subsection{$A$ contains only positive integers}\label{subsec2.1}

\begin{theorem}\label{thm1}
Let $h$ be a positive integer and let $A$ be a finite set of $k$ positive integers. We have
\[|h_{\underline{+}}A| \geq 2(hk-h+1).\] This lower bound is best possible for $h\leq2$.
\end{theorem}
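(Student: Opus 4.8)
The plan is to relate the signed sumset $h_{\underline{+}}A$ to an ordinary sumset so that Theorem \ref{thmA} can be applied. Since $A$ consists of positive integers, the key observation is that $h_{\underline{+}}A = h(A \cup -A)$ in this case, or at least that the signed sumset splits into a positive part and a negative part. Let me sketch this carefully.

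\medskip

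First I would examine the structure of $h_{\underline{+}}A$ when $A = \{a_0, a_1, \ldots, a_{k-1}\}$ contains only positive integers. An element of $h_{\underline{+}}A$ has the form $\sum \lambda_i a_i$ with $\sum |\lambda_i| = h$. I want to show that the set $B := A \cup (-A)$, which has $|B| = 2k$ elements (the $a_i$ are positive, so $A$ and $-A$ are disjoint), satisfies $h_{\underline{+}}A = hB$. The inclusion $h_{\underline{+}}A \subseteq hB$ is the easy direction: writing each $\lambda_i a_i$ as a sum of $|\lambda_i|$ copies of $\pm a_i \in B$ realizes any signed-sumset element as an $h$-fold sum from $B$. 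The reverse inclusion $hB \subseteq h_{\underline{+}}A$ is equally direct: an element of $hB$ uses $h$ summands, each of the form $\pm a_i$, and collecting coefficients gives integers $\lambda_i$ with $\sum |\lambda_i| = h$, provided no cancellation forces the absolute-value count below $h$. Here I must be careful: cancellation could occur, but since we are taking a \emph{set} union over all coefficient vectors, every element of $hB$ is attained by \emph{some} admissible vector, and conversely.

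\medskip

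Once the identity $h_{\underline{+}}A = h(A \cup -A)$ is established, the lower bound follows immediately by applying Theorem \ref{thmA} to the set $B = A \cup -A$ of size $2k$:
\[
|h_{\underline{+}}A| = |hB| \geq h|B| - h + 1 = h(2k) - h + 1 = 2hk - h + 1.
\]
This gives $2hk - h + 1$, but the claimed bound is $2(hk - h + 1) = 2hk - 2h + 2$. Comparing, $2hk - h + 1$ versus $2hk - 2h + 2$, the difference is $(2hk - h + 1) - (2hk - 2h + 2) = h - 1 \geq 0$, so in fact the naive application of Theorem \ref{thmA} gives a \emph{stronger} bound $2hk - h + 1 \geq 2(hk-h+1)$ for $h \geq 1$. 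So the real content is the sharpness claim, not the inequality.

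\medskip

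The main obstacle, therefore, is the sharpness statement: the bound $2(hk-h+1)$ is best possible only for $h \leq 2$. For $h = 1$ we have $|1_{\underline{+}}A| = |A \cup -A| = 2k = 2(k-1+1)$, matching the bound, witnessed by any symmetric progression. For $h = 2$ I would construct an explicit extremal set, presumably a $k$-term arithmetic progression of positive integers such as $A = \{1, 2, \ldots, k\}$, and compute $|2_{\underline{+}}A|$ directly to confirm equality with $2(2k-2+1) = 2(2k-1) = 4k - 2$. The subtlety is that although the generic lower bound $2hk - h + 1$ exceeds $2(hk-h+1)$ for $h \geq 2$, equality in the weaker stated bound can still be achieved at $h = 2$ because of overlaps between the positive and negative portions of the doubled progression; I would track these overlaps explicitly. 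For $h \geq 3$ I expect no set can attain $2(hk-h+1)$, consistent with the theorem asserting best-possibility only for $h \leq 2$, and the honest lower bound there is the larger quantity $2hk - h + 1$. Verifying these overlap counts for the extremal arithmetic progression is the one genuinely computational step, and getting the boundary cases ($k = 1$, small $h$) right is where care is needed.
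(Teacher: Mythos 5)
Your reduction rests on the identity $h_{\underline{+}}A = h(A\cup -A)$, and that identity is false; the failure occurs exactly at the point you flag and then wave away (``cancellation could occur, but\ldots''). In the signed sumset each element $a_i$ carries a \emph{single} integer coefficient $\lambda_i$, and the constraint is $\sum_i|\lambda_i|=h$ exactly; an $h$-fold sum from $B=A\cup(-A)$ that uses both $+a_i$ and $-a_i$ collapses, after collecting coefficients, into $(h-2j)_{\underline{+}}A$ for some $j\geq 1$, and need not be representable with absolute-coefficient sum equal to $h$. Concretely, $0=1+(-1)\in 2(A\cup -A)$ for $A=\{1,3\}$, but $0\notin 2_{\underline{+}}A$: no pair $(\lambda_0,\lambda_1)$ with $|\lambda_0|+|\lambda_1|=2$ satisfies $\lambda_0+3\lambda_1=0$. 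In general $h(A\cup -A)=\bigcup_{0\leq j\leq \lfloor h/2\rfloor}(h-2j)_{\underline{+}}A$ (with $0_{\underline{+}}A=\{0\}$), and $h_{\underline{+}}A$ is only the top layer, so bounds for $|hB|$ do not transfer to it. Consequently the ``stronger'' bound $|h_{\underline{+}}A|\geq 2hk-h+1$ you derive from Theorem~\ref{thmA} applied to $B$ is unjustified, and it is in fact false for $h=2$: it would assert $|2_{\underline{+}}A|\geq 4k-1$, contradicting the sharpness claim $4k-2$ in the very statement you are proving. This internal contradiction (you simultaneously claim the bound $2hk-h+1$ and claim that $2(hk-h+1)$ is attained at $h=2$) should have been the red flag. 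Your candidate extremal set $A=\{1,2,\ldots,k\}$ also fails: its signed double is $\{-2k,\ldots,-1\}\cup\{1,\ldots,2k\}$, of size $4k$, not $4k-2$; the correct extremal set is $A=\{1,3,\ldots,2k-1\}$, for which $2_{\underline{+}}A=\{\pm 2,\pm 4,\ldots,\pm(4k-2)\}$ has size exactly $4k-2$ --- and note that $0$ is absent from it precisely because of the cancellation obstruction described above.

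The salvageable core of your idea is to apply Theorem~\ref{thmA} not to $A\cup(-A)$ but to $A$ and $-A$ separately, using the inclusion that \emph{is} valid, namely $hA\cup h(-A)\subseteq h_{\underline{+}}A$: since all elements of $A$ are positive, $hA$ consists of positive integers and $h(-A)$ of negative integers, so these two sets are disjoint and $|h_{\underline{+}}A|\geq |hA|+|h(-A)|\geq 2(hk-h+1)$ for $h\geq 2$, the case $h=1$ being trivial since $|1_{\underline{+}}A|=|A\cup -A|=2k$. This is essentially the paper's argument, which exhibits the increasing chain $ha_0<(h-1)a_0+a_1<\cdots<ha_{k-1}$ of $hk-h+1$ positive sums together with its negative mirror image, and then verifies sharpness at $h=2$ with the odd progression $\{1,3,\ldots,2k-1\}$, not with $\{1,2,\ldots,k\}$.
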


\begin{proof}
Let $A=\{a_0, a_1,\ldots, a_{k-1}\}$, where $0 < a_{0} < a_{1} < \cdots < a_{k-1}$. The sumset $h_{\underline{+}}A$ contains at least the following $2(hk-h+1)$ integers.
\begin{eqnarray}\label{eqn1}
ha_{0} &<& (h-1)a_{0}+a_{1} < (h-2)a_{0}+2a_{1} < \cdots < a_{0}+(h-1)a_{1} < ha_{1}\nonumber \\
 &<& (h-1)a_{1}+a_{2} < (h-2)a_{1}+2a_{2} < \cdots < a_{1}+(h-1)a_{2} < ha_{2}\nonumber \\
 & \vdots & \nonumber\\
 &<& (h-1)a_{k-2}+a_{k-1} < (h-2)a_{k-2}+2a_{k-1} < \cdots < a_{k-2}+(h-1)a_{k-1}\nonumber \\
 &<& ha_{k-1}
\end{eqnarray}
and
\begin{eqnarray}\label{eqn2}
-ha_{k-1} &<& -(h-1)a_{k-1}-a_{k-2} < \cdots < -a_{k-1}-(h-1)a_{k-2} < -ha_{k-2}\nonumber \\
 &<& -(h-1)a_{k-2}-a_{k-3} < \cdots < -a_{k-2}-(h-1)a_{k-3} < -ha_{k-3}\nonumber \\
 & \vdots & \nonumber\\
 &<& -(h-1)a_{1}-a_{0} < \cdots < -a_{1}-(h-1)a_{0} < -ha_{0}.
\end{eqnarray}

Thus, \[|h_{\underline{+}}A| \geq 2(hk-h+1).\]
Next, we show that this lower bound is best possible. If $h=1$, then $|1_{\underline{+}}A|=2k$. Hence the lower bound is tight for every finite set $A$. Next, let $h=2$ and $A=\{1, 3, 5, \ldots, 2k-1 \}$. Then \[2_{\underline{+}}A=\{-(4k-2), \ldots, -4, -2, 2, 4, \ldots, (4k-2)\}.\]
 zHence, $|2\underline{+}A|=4k-2$. This completes the proof of the theorem.
\end{proof}

\begin{theorem}\label{thm2}
Let $h \geq 2$ and let $A$ be a finite set of $k$ positive integers. If $|h_{\underline{+}}A|=2(hk-h+1)$, then $h=2$ and $A=d*\{1,3,\ldots,2k-1\}$, for some positive integer $d$.
\end{theorem}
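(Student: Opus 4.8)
The plan is to exploit the disjoint inclusion $hA\cup h(-A)\subseteq h_{\underline{+}}A$ already used in the proof of Theorem~\ref{thm1}. Since every element of $A$ is positive, every element of $hA$ is positive and every element of $h(-A)=-(hA)$ is negative, so the union is \emph{disjoint} and $|h_{\underline{+}}A|\ge |hA|+|h(-A)|=2|hA|\ge 2(hk-h+1)$ by Theorem~\ref{thmA}. Hence the extremal hypothesis $|h_{\underline{+}}A|=2(hk-h+1)$ forces \emph{simultaneously} $|hA|=hk-h+1$ and $h_{\underline{+}}A=hA\cup h(-A)$. The first equality, via Theorem~\ref{thmB}, says $A$ is a $k$-term arithmetic progression; write $A=\{a+i\delta:0\le i\le k-1\}$ with $a\ge 1$, $\delta\ge 1$. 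We may assume $k\ge 2$ (for $k=1$ there are no mixed signed sums and the hypothesis cannot pin $h$). Because $h_{\underline{+}}(\alpha*A)=\alpha*(h_{\underline{+}}A)$ and dilation preserves cardinality, I factor out $d:=\gcd(a,\delta)$ and reduce to the case $\gcd(a,\delta)=1$; the target then becomes $a=1$, $\delta=2$, $h=2$, which recovers $A=d*\{1,3,\ldots,2k-1\}$.

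The whole game is to contradict the \emph{second} equality $h_{\underline{+}}A=hA\cup h(-A)$ unless the parameters are exactly as claimed, i.e. to show no mixed signed sum (one using coefficients of both signs) can escape $hA\cup h(-A)$. Two features of the right-hand side drive the argument. First, a \emph{residue obstruction}: every element of $hA$ is $\equiv ha\pmod{\delta}$ and every element of $h(-A)$ is $\equiv -ha\pmod{\delta}$, whereas a mixed sum with positive part of total size $P$ and negative part of total size $N=h-P$ lies in the class $(P-N)a\pmod{\delta}$; forcing $(P-N)a\equiv\pm ha\pmod\delta$ for every admissible split $1\le P\le h-1$ (each realizable since $k\ge 2$, e.g. putting $+P$ on $a+\delta$ and $-N$ on $a$) is a divisibility constraint that confines $\delta$ to a few small values. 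Second, a \emph{gap obstruction}: $hA\cup h(-A)$ contains no integer of absolute value $<ha$, so any mixed sum landing in the open interval $(-ha,ha)$ is an immediate contradiction.

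To finish I would combine these. For the residue step, fix $s=P-N$ and vary the weight $t=\sum_i i\lambda_i$: for $k\ge 2$ the value $as+\delta t$ of such mixed sums attains both large positive and large negative values, so unless $hA$ and $h(-A)$ occupy the \emph{same} residue class (that is, unless $\delta\mid 2ha$) some mixed sum is pushed onto the wrong side of $0$; applying this across the splits $P=1,2$ eliminates every large $\delta$ and also rules out $h\ge 3$ in that regime. For the few surviving small differences I would exhibit an explicit offending element, most usefully $(h-1)a_0-a_1=(h-2)a-\delta$, its sign-variant, and $2a_0-a_1=a-\delta$, checking each lands in $(-ha,ha)$ whenever $(\delta,h)\neq(2a,2)$. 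In the normalized situation this is transparent: once one has reduced to $A=\{1,3,\ldots,2k-1\}$, the mixed sum $(h-1)a_0-a_1=h-4$ has absolute value strictly less than $h=ha$ for \emph{every} $h\ge 3$, forcing $h=2$; and $|2_{\underline{+}}A|=4k-2$ is then attained exactly as computed in Theorem~\ref{thm1}, closing the loop.

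I expect the main obstacle to be the combinatorial bookkeeping of the mixed sums: one must guarantee that \emph{every} coefficient vector of mixed sign is accounted for, not merely a convenient one, and organize the residue computation uniformly over all splits $(P,N)$ while treating the genuinely exceptional small differences (such as $\delta\in\{1,2,4\}$ after normalization, the last surviving only for odd $h$) by hand. Keeping this case division controlled and uniform in $h$ and $k$ is the delicate part; by contrast the disjointness and the arithmetic-progression reduction of the first paragraph are routine.
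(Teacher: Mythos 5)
Your route is genuinely different from the paper's, and its first half is both correct and cleaner. The paper never invokes Theorems \ref{thmA} and \ref{thmB}: it notes that equality in Theorem \ref{thm1} makes the lists (\ref{eqn1}) and (\ref{eqn2}) exhaustive, recovers the arithmetic-progression property by sandwiching $a_{i-1}+(h-2)a_i+a_{i+1}$ between consecutive listed terms, and then pins down $h$ via the chain (\ref{eqn3}): each mixed sum $-(h-i)a_1+ia_0$ must equal the listed element $-(h-i-1)a_1-(i+1)a_0$, forcing $a_1-a_0=2ia_0$ for all $1\le i\le h-1$, which is possible only for $h=2$. Your disjointness observation $hA\sqcup h(-A)\subseteq h_{\underline{+}}A$, the resulting identities $|hA|=hk-h+1$ and $h_{\underline{+}}A=hA\cup h(-A)$, and the appeal to Theorem \ref{thmB} for the progression structure are all valid (with the implicit, and necessary, assumption $k\ge 2$), and they buy a shortcut the paper does not take. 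The two proofs diverge only in the endgame: the paper argues by order and interleaving, you argue by congruences modulo $\delta$ together with sign and gap obstructions.

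That endgame, as written, has one genuine flaw. Your claim that $(h-1)a_0-a_1$, its sign-variant, and $2a_0-a_1$ land in the open gap $(-ha,ha)$ ``whenever $(\delta,h)\neq(2a,2)$'' is false, and it fails exactly at a case your sieve routes to these elements, namely the normalized survivor $\delta=4$ with $h$ odd: for $a=1$, $\delta=4$, $h=3$ (so $A=\{1,5,9,\ldots\}$) all three of your elements equal $\pm 3=\pm ha$, which lie in $hA\cup h(-A)$, so they certify nothing. (That case is in fact contradictory --- $2a_1-a_0=9$ is positive yet $9\equiv 1\equiv -ha \pmod{4}$, so it could only sit in the all-negative set $h(-A)$ --- but this is your residue-sign mechanism at work, not your gap elements.) The clean repair lies inside your own framework: apply the ``wrong side of zero'' argument to the extreme mixed sum $(h-1)a_{k-1}-a_0=(h-2)a+(h-1)(k-1)\delta$, which is positive and hence must lie in $hA$; this forces $(h-2)a\equiv ha\pmod{\delta}$, i.e.\ $\delta\mid 2a$, which is strictly stronger than the $\delta\mid 2ha$ you state and eliminates everything except $\delta\in\{1,2\}$ after normalization. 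Once $\delta\le 2a$ is known, your element $(h-1)a_0-a_1=(h-2)a-\delta$ really does fall in the open gap $(-ha,ha)$ unless $h=2$ and $\delta=2a$, and the proof closes as you intended.
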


\begin{proof}
Let $A=\{a_0, a_1,\ldots, a_{k-1}\}$, where $0 < a_{0} < a_{1} < \cdots < a_{k-1}$.
Since $|h_{\underline{+}}A|=2(hk-h+1)$, it follows from Theorem \ref{thm1}, that the sumset $h_{\underline{+}}A$ consists precisely the integers listed in  (\ref{eqn1}) and (\ref{eqn2}). For each $i=1, 2, \ldots, k-2$, we have
\[a_{i-1}+(h-1)a_{i} < ha_{i} < (h-1)a_{i}+a_{i+1}.\]
Also,
 \[a_{i-1}+(h-1)a_{i} < a_{i-1}+(h-2)a_{i}+a_{i+1} < (h-1)a_{i}+a_{i+1}.\]
Thus,
 \[ha_{i}=a_{i-1}+(h-2)a_{i}+a_{i+1}.\]
 This is equivalent to \[a_{i+1}-a_{i}=a_{i}-a_{i-1}.\]
 Therefore, the set $A$ is in arithmetic progression, i.e., $a_{i}-a_{i-1}=d$, for some $d>0$ and for all $1\leq i\leq k-1$. Again,
\begin{eqnarray}\label{eqn3}
-ha_{1} &<& -(h-1)a_{1}-a_{0}\nonumber \\ &<& -(h-1)a_{1}+a_{0}\nonumber \\ &<& -(h-2)a_{1}+2a_{0}\\ & \vdots &\nonumber \\ &<& -a_{1}+(h-1)a_{0}\nonumber \\ &<& ha_{0}\nonumber.
\end{eqnarray}
Thus, from (\ref{eqn1}), (\ref{eqn2}) and (\ref{eqn3}), it follows that, for $i=1,2,\ldots, h-1$,
\[-(h-i)a_{1}+ia_{0}=-(h-i-1)a_{1}-(i+1)a_{0}.\]
So, the common difference $d=a_{1}-a_{0}=2ia_{0}$,
for $i=1,2,\ldots, h-1$. This is possible, only if $h=2$. Hence, \[A=d*\{1,3,\ldots, 2k-1\}.\]
This completes the proof of the theorem.
\end{proof}

\begin{theorem}\label{thm3}
Let $h\geq 3$ be a positive integer. Let $A$ be a finite set of $k\geq 3$ positive integers. Then
\begin{equation}\label{eqn4}
|h_{\underline{+}}A| \geq 2hk-h+1.
\end{equation}
This lower bound is best possible.
\end{theorem}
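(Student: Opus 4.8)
The plan is to start from the two increasing chains already isolated in the proof of Theorem~\ref{thm1}: the positive chain \eqref{eqn1} and the negative chain \eqref{eqn2} together furnish $2(hk-h+1)$ elements of $h_{\underline{+}}A$, all satisfying $|x|\ge ha_0$. Since $2hk-h+1=2(hk-h+1)+(h-1)$, it remains to exhibit $h-1$ further elements of $h_{\underline{+}}A$ that are distinct from every term of \eqref{eqn1} and \eqref{eqn2}. The natural place to look is the central interval $(-ha_0,ha_0)$: no term of the two chains lies there, so any signed sum landing in it is automatically new. I would use the symmetry $h_{\underline{+}}A=-h_{\underline{+}}A$ throughout, which pairs these new elements and, in the extremal configuration, forces the value $0$ to occur precisely when $h$ is even.

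The central elements should be produced as ``balanced'' signed sums. Writing a signed sum as $\sum\lambda_i a_i$ with $\sum|\lambda_i|=h$ and separating positive from negative coefficients, a combination whose negative weight is $c$ has coefficient sum $S=\sum\lambda_i=h-2c$; using the three smallest points $a_0,a_1,a_2$ (available because $k\ge3$) I would try to solve $\sum\lambda_i=h-2c$ together with $\sum i\lambda_i=0$ so as to realize the values $(h-2c)a_0$ for $c=1,\dots,\lfloor(h-1)/2\rfloor$, and the value $0$ when $h$ is even. Each such value has absolute value $<ha_0$, hence is new, and its symmetric partner $-(h-2c)a_0$ supplies the rest; in the model set $d*\{1,3,\dots,2k-1\}$ these are exactly the $h-1$ missing central integers $\{\pm(h-2c)\}\cup\{0\ \text{if}\ h\ \text{even}\}$.

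The hard part is that neither the realizability nor the newness of these central elements is unconditional: solving $\sum i\lambda_i=0$ with $\sum|\lambda_i|=h$ over three points is restricted by the parities of $h$ and $c$, and, more seriously, when $A$ spreads out (a progression with large common difference, or any non-progression) a signed sum of small coefficient sum need no longer fall inside $(-ha_0,ha_0)$, so the central interval may hold fewer than $h-1$ elements. The remedy, and the core of the argument, is a case analysis on the geometry of $a_0<a_1<a_2$ (governed by comparisons such as $(h+i)a_0\gtrless(h-i)a_1$): in the ``tight'' regime one counts central elements directly, while in the ``spread'' regime the deficit is absorbed by the surplus $|hA|-(hk-h+1)$, since $h_{\underline{+}}A\supseteq hA\cup(-hA)$ gives $|h_{\underline{+}}A|\ge 2|hA|$, and by Theorem~\ref{thmB} a set $A$ that is not an arithmetic progression makes this surplus strictly positive (and, one checks, large enough). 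Balancing these two sources across all configurations, while tracking the parity of $h$ that decides whether $0\in h_{\underline{+}}A$, is exactly where the \emph{case}/\emph{subcase} bookkeeping becomes unavoidable, and I expect it to be the main obstacle.

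Finally, the bound \eqref{eqn4} is best possible via the model set $A=d*\{1,3,\dots,2k-1\}$. By the dilation identity $h_{\underline{+}}(d*A')=d*(h_{\underline{+}}A')$ it suffices to treat $A'=\{1,3,\dots,2k-1\}$. Every signed sum $\sum\lambda_i a_i$ is then an integer congruent to $h$ modulo $2$, since $\sum\lambda_i\equiv\sum|\lambda_i|=h\pmod 2$, lying in the interval $[-h(2k-1),\,h(2k-1)]$; that interval contains exactly $h(2k-1)+1=2hk-h+1$ integers of the correct parity. Hence $|h_{\underline{+}}A|\le 2hk-h+1$, which together with \eqref{eqn4} forces equality.
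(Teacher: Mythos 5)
Your overall frame is the same as the paper's: start from the chains \eqref{eqn1} and \eqref{eqn2}, which give $2(hk-h+1)$ elements, and hunt for $h-1$ further ones; and your optimality argument for $A=d*\{1,3,\ldots,2k-1\}$ (dilation plus the parity congruence $\sum\lambda_i\equiv\sum|\lambda_i|=h\pmod 2$) is correct and is essentially the paper's. The gap is in the lower bound: your two mechanisms --- central elements in the ``tight'' regime, the surplus $|hA|-(hk-h+1)$ in the ``spread'' regime --- jointly fail to cover arithmetic progressions with large common difference, and you have explicitly placed such sets in the spread regime. If $A$ is an AP, Theorem~\ref{thmB} gives $|hA|=hk-h+1$ exactly, so the surplus is \emph{zero}, not ``strictly positive''; and for a spread AP the central interval really does hold fewer than $h-1$ elements when $k=3$. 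Concretely, take $h=8$ and $A=\{1,1+d,1+2d\}$ with $d$ large. Every signed sum equals $sa_0+jd$ with $s=\sum\lambda_i$, $j=\lambda_1+2\lambda_2$, so a central element forces $j=0$, i.e.\ $\lambda_1=-2\lambda_2$; writing $\lambda_2=t$, realizability with weight exactly $8$ becomes $|s+t|+3|t|=8$. For $s=\pm 2$ this has no integer solution (the left side takes the values $2,4,6,6,10,\ldots$), while $s\in\{0,\pm4,\pm6\}$ works; so the central interval contributes only $5<h-1=7$ new elements, and your count stalls at $2(hk-h+1)+5=39<41=2hk-h+1$. The theorem still holds for this $A$, but through elements that are neither central nor explained by any surplus: the paper (Subcase~\ref{subcase1}, $d>2a_0$) interleaves the summands $(h-2-i)a_0-a_1+(i+1)a_2=(h-2)a_0+(2i+1)d$, $i=0,\ldots,h-2$, strictly between consecutive terms of \eqref{eqn1}. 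Some such ``near-chain'' construction is exactly what your outline is missing, and it cannot be generated from your two declared sources.

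Two smaller problems point the same way. First, even where the surplus idea does apply ($A$ genuinely not an AP), Theorem~\ref{thmB} only yields surplus $\geq 1$, hence $|h_{\underline{+}}A|\geq 2(hk-h+2)$, which reaches $2hk-h+1$ only when $h=3$; for $h\geq 4$ you need surplus at least $\lceil (h-1)/2\rceil$. That is true (if $A$ is not an AP then $|hA|\geq|(h-1)A|+|A|$, whence $|hA|\geq hk$), but it needs this iterated-sumset argument, which you neither state nor cite --- ``one checks, large enough'' is doing real work. Second, even in the tight regime your specific targets $(h-2c)a_0$ can be unrealizable for $k=3$: for $A=\{100,101,102\}$ and $h=8$ the same computation as above shows $2a_0\notin h_{\underline{+}}A$, so the count must be completed with \emph{other} central elements; the paper does this in Subcase~\ref{subcase2} by an induction on $h$ producing elements such as $(h-2)a_0\pm d$, rather than multiples of $a_0$ alone. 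So while your instincts about where the difficulty lies are sound, the proof as proposed does not close, and the repair is not bookkeeping but a new construction for the spread-AP case.
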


The above theorem does not hold for $k=2$, as it can be seen by taking $A=\{1,2\}$, $h=3$; $A=\{1,3\}$, $h=4$; and $A=\{2,3\}$, $h=5$.

Further, if $A=\{a_{0},a_{1}\}$, where $0<a_{0}<a_{1}$ and $h<\frac{a_{0}+a_{1}}{2a_{0}}$, we observe in the following remark that $|h_{\underline{+}}A|=4h$.

\begin{remark}
Let $h\geq 3$ and $A=\{a_{0},a_{1}\}$, where $0<a_{0}<a_{1}$. Then, every summand in $h_{\underline{+}}A$ is either of the form $(h-i)a_{0}+ia_{1}$, or $(h-i)a_{0}-ia_{1}$, or $-(h-i)a_{0}+ia_{1}$, or $-(h-i)a_{0}-ia_{1}$, where $0\leq i\leq h$. Hence, the maximum possibility of integers in $h_{\underline{+}}A$ is $4h$, i.e.,
\begin{equation}\label{eqnr1}
|h_{\underline{+}}A|\leq 4h.
\end{equation}
On the other hand, as $h<\frac{a_{0}+a_{1}}{2a_{0}}$, i.e., $0<(2h-1)a_{0}<a_{1}$, we have
\begin{eqnarray*}
ha_{0}&<&-(h-1)a_{0}+a_{1}<(h-1)a_{0}+a_{1}<-(h-2)a_{0}+2a_{1}\\ &<& (h-2)a_{0}+2a_{1}< \cdots
                                                               < -a_{0}+(h-1)a_{1}<a_{0}+(h-1)a_{1}<ha_{1}.
\end{eqnarray*}
Since each of the above $2h$ signed $h$-fold summand is positive and in $h_{\underline{+}}A$, their negatives are also in $h_{\underline{+}}A$. Hence, $|h_{\underline{+}}A|\geq 4h$. This together with (\ref{eqnr1}) give $|h_{\underline{+}}A|=4h$.
\end{remark}


\begin{proof}[\textbf{Proof of theorem \ref{thm3}.}]
Let $A=\{a_0, a_1,\ldots, a_{k-1}\}$ be a finite set of integers, where $0 < a_{0} < a_{1} < \cdots < a_{k-1}$. From Theorem \ref{thm1}, it follows that the sumset $h_{\underline{+}}A$ contains at least $2(hk-h+1)$ integers listed in (\ref{eqn1}) and (\ref{eqn2}). So, it remains to show at least $(h-1)$ extra integers in $h_{\underline{+}}A$ different from the integers  in (\ref{eqn1}) and (\ref{eqn2}). To show this, we consider three cases depending on $a_{2}-a_{1}<a_{1}-a_{0}$, $a_{2}-a_{1}>a_{1}-a_{0}$, and $a_{2}-a_{1}=a_{1}-a_{0}$. Except in a subcase of the last case, namely, $a_{2}-a_{1}=a_{1}-a_{0}=2a_{0}$, which will lead to present the example for the best possible bound, we show much more extra summands than $h-1$ in $h_{\underline{+}}A$.

\begin{case}\label{case1} ($a_{2}-a_{1}<a_{1}-a_{0}$, i.e., $a_{2}<2a_{1}-a_{0}$). Consider the following sequence of integers, which is taken from (\ref{eqn1}).
\begin{equation}\label{eqn5}
(h-1)a_{0}+a_{1}<(h-2)a_{0}+2a_{1}<(h-3)a_{0}+3a_{1}<\cdots<a_{0}+(h-1)a_{1}<ha_{1}
\end{equation}

We shall insert an extra signed $h$-fold summand between each pair of successive integers of (\ref{eqn5}) as follows:

\noindent $(h-1)a_{0}+a_{1}<(h-1)a_{0}+a_{2}<(h-2)a_{0}+2a_{1}<(h-2)a_{0}+a_{1}+a_{2}<(h-3)a_{0}+3a_{1}<(h-3)a_{0}+2a_{1}+a_{2}<(h-4)a_{0}+4a_{1}
<\cdots<2a_{0}+(h-3)a_{1}+a_{2}<a_{0}+(h-1)a_{1}<a_{0}+(h-2)a_{1}+a_{2}<ha_{1}$.

Thus, we get $h-1$ extra positive integers of $h_{\underline{+}}A$. Similarly, taking the negatives of these $h-1$ summands, we get another set of $h-1$ integers of $h_{\underline{+}}A$. Hence, we get a total of at least $2(h-1)$ extra integers of $h_{\underline{+}}A$, not already listed in (\ref{eqn1}) and (\ref{eqn2}).
\end{case}


\begin{case}\label{case2} ($a_{2}-a_{1}>a_{1}-a_{0}$, i.e., $2a_{1}<a_{2}+a_{0}$). Similar to the Case 1, we have

\noindent $ha_{1}<(h-2)a_{1}+a_{2}+a_{0}<(h-1)a_{1}+a_{2}<(h-3)a_{1}+2a_{2}+a_{0}<(h-2)a_{1}+2a_{2}<(h-4)a_{1}+3a_{2}+a_{0}<(h-3)a_{1}+3a_{2}
<\cdots<(h-1)a_{2}+a_{0}<a_{1}+(h-1)a_{2}$.

So, we get $h-1$ extra summands in $h_{\underline{+}}A$ between $ha_{1}$ and $a_{1}+(h-1)a_{2}$. Hence, taking negatives of these $h-1$ positive summands, we get a total of at least $2(h-1)$ extra integers of $h_{\underline{+}}A$.
\end{case}


\begin{case}\label{case3} ($a_{2}-a_{1}=a_{1}-a_{0}$, i.e., $a_{0}$, $a_{1}$, $a_{2}$ are in arithmetic progression). Let $a_{1}=a_{0}+d$, $a_{2}=a_{0}+2d$, for some positive integer $d$.

\begin{subcase}\label{subcase1} ($d>2a_{0}$). Consider the following integers of (\ref{eqn1})

$ha_{0}<(h-1)a_{0}+a_{1}<(h-2)a_{0}+2a_{1}<\cdots<a_{0}+(h-1)a_{1}<ha_{1}<(h-1)a_{1}+a_{2}<(h-2)a_{1}+2a_{2}<\cdots<a_{1}+(h-1)a_{2}<ha_{2}$.
\\ Rewrite the list as

$ha_{0}<ha_{0}+d<ha_{0}+2d<\cdots<ha_{0}+(h-1)d<ha_{0}+hd<ha_{0}+(h+1)d<ha_{0}+(h+2)d<\cdots<ha_{0}+(2h-1)d<ha_{0}+2hd$.

For each $i=0,1,\ldots,h-2$, we insert an extra summand between $ha_{0}+2id$ and $ha_{0}+(2i+1)d$. We have,

$ha_{0}+2id<(h-2)a_{0}+(2i+1)d=(h-2-i)a_{0}-a_{1}+(i+1)a_{2}<ha_{0}+(2i+1)d$.

Each of these $h-1$ extra signed $h$-fold summands $(h-2-i)a_{0}-a_{1}+(i+1)a_{2}$, is positive. So, we get $h-1$ extra positive integers of $h_{\underline{+}}A$. The negatives of these $h-1$ integers are also signed $h$-fold summands, hence are in the set $h_{\underline{+}}A$ and different from the summands in (\ref{eqn2}). Hence, we get at least $2(h-1)$ extra integers of $h_{\underline{+}}A$, which are not listed in (\ref{eqn1}) and (\ref{eqn2}).
\end{subcase}

\begin{subcase}\label{subcase2} ($d<2a_{0}$). We use induction argument on $h$ to write $\lfloor\frac{h}{2}\rfloor$ extra positive integers of $h_{\underline{+}}A$.

\noindent If $h=3$, then \[a_{0}<a_{2}-a_{1}+a_{0}=a_{0}+d<3a_{0}.\]
If $h=4$, then \[2a_{0}<a_{2}-a_{1}+2a_{0}=2a_{0}+d<4a_{0},\] and \[0<-a_{1}+3a_{0}=2a_{0}-d<2a_{0}.\]
If $h=5$, then \[3a_{0}<a_{2}-a_{1}+3a_{0}=3a_{0}+d<5a_{0},\] and \[a_{0}<-a_{1}+4a_{0}=3a_{0}-d<3a_{0}.\]
If $h=6$, then \[4a_{0}<a_{2}-a_{1}+4a_{0}=4a_{0}+d<6a_{0},\] \[2a_{0}<-a_{1}+5a_{0}=4a_{0}-d<4a_{0},\] and \[0<2a_{2}-3a_{1}+a_{0}=d<2a_{0}.\]

In all the above cases we get exactly $\lfloor\frac{h}{2}\rfloor$ number of extra positive signed $h$-fold summands,  which are not included in (\ref{eqn1}) and (\ref{eqn2}). Now, let $h\geq 7$ and assume that the result is true for $h-1$. If $h=4k+1$ or $h=4k+3$ for some $k\geq 1$, then  $\lfloor\frac{h}{2}\rfloor=\lfloor\frac{h-1}{2}\rfloor=\frac{h-1}{2}$. By the induction hypothesis, $\lfloor\frac{h-1}{2}\rfloor$ extra positive integers as signed $(h-1)$-fold summands may be obtained in $(h-1)_{\underline{+}}A$. Adding a single copy of $a_{0}$ to all these $(h-1)$-fold summands, we can obtain $\lfloor\frac{h-1}{2}\rfloor(=\lfloor\frac{h}{2}\rfloor)$ extra positive signed $h$-fold summands. This completes the induction in this case.

 Now, let $h=4k$, $k\geq 1$. Then $\lfloor\frac{h-1}{2}\rfloor$ extra positive integers may be obtained from the $\lfloor\frac{h-1}{2}\rfloor$ extra positive summands of $(h-1)$-fold signed sumset of $A$ by just adding $a_{0}$ to it and one more summand is given by $0<(k-1)a_{2}-(2k-1)a_{1}+(k+2)a_{0}=2a_{0}-d<2a_{0}$. Hence, we get $\lfloor\frac{h}{2}\rfloor$ extra positive integers.

Similarly, if $h=4k+2$, $k\geq 1$, then $\lfloor\frac{h-1}{2}\rfloor$ extra positive integers may be obtained from the $\lfloor\frac{h-1}{2}\rfloor$ extra positive summands of $(h-1)$-fold signed sumset of $A$ by just adding $a_{0}$ to it and one more summand is given by $0<(k+1)a_{2}-(2k+1)a_{1}+ka_{0}=d<2a_{0}$.

Since, the negatives of these $\lfloor\frac{h}{2}\rfloor$ integers are also in the set $h_{\underline{+}}A$. Hence, we get a total of at least $2\lfloor\frac{h}{2}\rfloor$ extra integers in $h_{\underline{+}}A$.

Further, in both the above subcases \ref{subcase1} and \ref{subcase2}, we get even more $2\lfloor\frac{h}{3}\rfloor$ integers. Let $m$ be the largest integer such that $3m\leq h$, i.e., $m=\lfloor\frac{h}{3}\rfloor$ or $h=3m+\epsilon$, $\epsilon\in\{0,1,2\}$. Then,
\begin{center}
\[(h-3)a_{0}+2a_{1}-a_{2}=(h-2)a_{0},\]
\[(h-6)a_{0}+4a_{1}-2a_{2}=(h-4)a_{0},\]
\[(h-9)a_{0}+6a_{1}-3a_{2}=(h-6)a_{0},\]
\vdots
\[\epsilon a_{0}+2ma_{1}-ma_{2}=(m+\epsilon)a_{0}.\]
\end{center}

So, there are $m=\lfloor\frac{h}{3}\rfloor$ further extra positive signed $h$-fold summands which are multiples of $a_{0}$, between $0$ and $ha_{0}$. Thus, including negatives of these integers we get, $2m=2\lfloor\frac{h}{3}\rfloor$ even more extra integers in both the subcases $d>2a_{0}$ and $d<2a_{0}$. Hence, in both the subcases \ref{subcase1} and \ref{subcase2}, we get a total of at least $2(\lfloor\frac{h}{2}\rfloor + \lfloor\frac{h}{3}\rfloor)$ extra signed $h$-fold summands neither included in (\ref{eqn1}) nor in (\ref{eqn2}).
\end{subcase}

\begin{subcase}\label{subcase3} ($d=2a_{0}$). In this case we show that  $-(h-2)a_{0},-(h-4)a_{0},\\-(h-6)a_{0},\ldots,(h-6)a_{0},(h-4)a_{0},(h-2)a_{0}$ are signed $h$-fold summands, which are neither included in (\ref{eqn1}) nor in (\ref{eqn2}). Clearly, their number is $h-1$.

If $h=3$, then $2a_{1}-a_{2}=a_{0}$, and $a_{2}-2a_{1}=-a_{0}$. So, we get $(h-1)=2$ distinct integers which are previously not included.

Now, let $h\geq 4$.  Rewrite the summands of (\ref{eqn1}), which are between $ha_{0}$ and $ha_{1}$ as follows:
\begin{equation}\label{eqn6}
(h-1)a_{0}+a_{1}<(h-2)a_{0}+2a_{1}<(h-3)a_{0}+3a_{1}<\cdots<a_{0}+(h-1)a_{1}
\end{equation}

 Adding $-(a_{1}+a_{2})$ to the first three successive integers $(h-1)a_{0}+a_{1}$, $(h-2)a_{0}+2a_{1}$, $(h-3)a_{0}+3a_{1}$ of (\ref{eqn6}), we get
\[(h-1)a_{0}+a_{1}-(a_{1}+a_{2})=(h-1)a_{0}-a_{2}=(h-6)a_{0},\]
\[(h-2)a_{0}+2a_{1}-(a_{1}+a_{2})=(h-2)a_{0}+a_{1}-a_{2}=(h-4)a_{0},\]
and
\[(h-3)a_{0}+3a_{1}-(a_{1}+a_{2})=(h-3)a_{0}+2a_{1}-a_{2}=(h-2)a_{0}.\]

Now leave the first term of (\ref{eqn6}) and add $-2(a_{1}+a_{2})$ to the next three successive integers $(h-2)a_{0}+2a_{1}$, $(h-3)a_{0}+3a_{1}$, $(h-4)a_{0}+4a_{1}$ of (\ref{eqn6}), we get
\[(h-2)a_{0}+2a_{1}-2(a_{1}+a_{2})=(h-2)a_{0}-2a_{2}=(h-12)a_{0},\]
\[(h-3)a_{0}+3a_{1}-2(a_{1}+a_{2})=(h-3)a_{0}+a_{1}-2a_{2}=(h-10)a_{0},\]
and
\[(h-4)a_{0}+4a_{1}-2(a_{1}+a_{2})=(h-4)a_{0}+2a_{1}-2a_{2}=(h-8)a_{0}.\]

We continue this process up to the last triplet $3a_{0}+(h-3)a_{1}$, $2a_{0}\\+(h-2)a_{1}$, $a_{0}+(h-1)a_{1}$ of (\ref{eqn6}) by adding $-(h-3)(a_{1}+a_{2})$, to get
\[3a_{0}+(h-3)a_{1}-(h-3)(a_{1}+a_{2})=3a_{0}-(h-3)a_{2}=-(5h-18)a_{0},\]
\[2a_{0}+(h-2)a_{1}-(h-3)(a_{1}+a_{2})=2a_{0}+a_{1}-(h-3)a_{2}=-(5h-20)a_{0},\]
and
\[a_{0}+(h-1)a_{1}-(h-3)(a_{1}+a_{2})=a_{0}+2a_{1}-(h-3)a_{2}=-(5h-22)a_{0}.\]

The above process covers all the $h-1$ integers $-(h-2)a_{0},-(h-4)a_{0},\\-(h-6)a_{0}, \ldots,(h-6)a_{0},(h-4)a_{0},(h-2)a_{0}$ as signed $h$-fold summands with some other possible negative integers which are already counted in (\ref{eqn2}). One may stop this process till one gets $-(h-2)a_{0}$. Thus, we get exactly $h-1$ extra integers of $h_{\underline{+}}A$, not already included in (\ref{eqn1}) and (\ref{eqn2}).
\end{subcase}
\end{case}

Thus, in all the above cases \ref{case1}, \ref{case2} and \ref{case3}, we get at least $h-1$ extra integers of $h_{\underline{+}}A$, which are not included in (\ref{eqn1}) and (\ref{eqn2}). Hence, $|h_{\underline{+}}A| \geq 2hk-h+1$.

Next, we show that this lower bound is best possible. Let $A=\{1,3,5,\ldots,\\(2k-1)\}$ for some integer $k\geq 1$. If $h$ is even, then
\[h_{\underline{+}}A \subseteq \{-h(2k-1), \ldots, -4, -2,0, 2, 4, \ldots, h(2k-1)\}.\]
If $h$ is odd, then \[h_{\underline{+}}A \subseteq \{-h(2k-1), \ldots, -5, -3, -1, 1, 3, 5, \ldots, h(2k-1)\}.\]

In both these cases, $|h_{\underline{+}}A|\leq 2hk-h+1$. Hence, together with (\ref{eqn4}), we get, $|h_{\underline{+}}A|=2hk-h+1$.
This completes the proof of the theorem.
\end{proof}

\begin{theorem}\label{thm4}
Let $h \geq 3$ and let $A$ be a finite set of $k\geq 3$ positive integers. If $|h_{\underline{+}}A|=2hk-h+1$, then $A=d*\{1,3,\ldots,2k-1\}$, for some positive integer $d$.
\end{theorem}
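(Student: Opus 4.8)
The plan is to prove the inverse theorem by carefully combining the three-case analysis used in the proof of Theorem~\ref{thm3} with an induction on $k$. The equality $|h_{\underline{+}}A| = 2hk - h + 1$ means that, relative to the baseline count $2(hk-h+1)$ from Theorem~\ref{thm1}, we have exactly $h-1$ extra integers and no more. Since the proof of Theorem~\ref{thm3} shows that Cases~\ref{case1} and~\ref{case2}, as well as Subcases~\ref{subcase1} and~\ref{subcase2} of Case~\ref{case3}, all force strictly more than $h-1$ extra integers (indeed at least $2(h-1)$ or $2(\lfloor h/2\rfloor + \lfloor h/3\rfloor)$), equality can only occur in Subcase~\ref{subcase3}, namely $a_2 - a_1 = a_1 - a_0 = 2a_0$. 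First I would record this as the crucial structural reduction: \emph{equality forces the first three elements to be $a_0, 3a_0, 5a_0$}, i.e.\ $a_1 = 3a_0$ and $a_2 = 5a_0$.

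Next, having pinned down $a_0, a_1, a_2$, I would leverage the fact that the same case analysis applies to every consecutive triple $a_{i-1}, a_i, a_{i+1}$. The key observation is that the arguments of Theorem~\ref{thm3} are essentially local: for any three consecutive elements the relations $a_{i+1}-a_i$ versus $a_i - a_{i-1}$ determine how many extra summands appear, and the only configuration compatible with the exact count $2hk-h+1$ is the arithmetic-progression-with-gap-$2a_0$ pattern. I would therefore argue that for each $i$ we must have $a_{i+1} - a_i = a_i - a_{i-1} = 2a_0$; any deviation (either a non-arithmetic triple, or an arithmetic triple with common difference $d \neq 2a_0$) would, by the corresponding case of Theorem~\ref{thm3}, inject extra integers exceeding the $h-1$ budget already consumed, contradicting minimality. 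This forces $A$ to be the full arithmetic progression $\{a_0, 3a_0, 5a_0, \ldots, (2k-1)a_0\} = a_0 * \{1,3,\ldots,2k-1\}$, giving the desired $d = a_0$.

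The main obstacle, and the step requiring the most care, is making the ``local-to-global'' propagation rigorous. The extra-summand counts in Theorem~\ref{thm3} were computed for the triple $a_0, a_1, a_2$ sitting at the bottom of the set, where the listed summands in~(\ref{eqn1}) and~(\ref{eqn2}) have a clean linear order; for an interior triple one must verify that the newly produced summands genuinely lie strictly between consecutive baseline integers and are not accidentally absorbed into summands already counted elsewhere in the global list. I would handle this by an explicit interval-localization argument: the candidate extra integers attached to the triple $a_{i-1},a_i,a_{i+1}$ lie in a range determined by that triple, and I would check that distinct triples contribute to disjoint ranges (or at least that any overlap still leaves a net surplus beyond $h-1$). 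A clean way to organize this is induction on $k$: assuming the result for $k-1$, one shows that the set $\{a_0,\ldots,a_{k-2}\}$ must itself be the progression $a_0*\{1,3,\ldots,2k-3\}$, and then a final count of the summands involving $a_{k-1}$ pins down $a_{k-1} = (2k-1)a_0$.

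Finally, I would close the argument by confirming the forward direction is already available: when $A = d*\{1,3,\ldots,2k-1\}$, the computation at the end of the proof of Theorem~\ref{thm3} shows $|h_{\underline{+}}A| = 2hk - h + 1$ exactly, so the characterization is sharp and the two directions match. The delicate points throughout are purely bookkeeping about which summands coincide, and the substantive content is entirely the reduction to Subcase~\ref{subcase3} together with its propagation along the whole set.
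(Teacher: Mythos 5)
Your opening reduction is exactly the paper's first step: since Cases~\ref{case1} and~\ref{case2} and Subcases~\ref{subcase1} and~\ref{subcase2} each force at least $2(h-1)$ or $2(\lfloor h/2\rfloor+\lfloor h/3\rfloor)$ extra elements, which exceeds the budget of $h-1$ when $h\geq 3$, equality can only occur in Subcase~\ref{subcase3}, so $a_2-a_1=a_1-a_0=2a_0$. The gap is in your second step, and it sits precisely where you yourself flag the ``main obstacle.'' Re-running the case analysis of Theorem~\ref{thm3} on an interior triple $(a_{i-1},a_i,a_{i+1})$ requires verifying that the locally constructed summands are genuinely new, i.e.\ not absorbed by the lists (\ref{eqn1}), (\ref{eqn2}) or by the $h-1$ extra multiples of $a_0$ already consumed; your proposal only promises to carry out this bookkeeping (``I would check that distinct triples contribute to disjoint ranges\ldots'') and sketches an induction on $k$ that is never executed. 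As written this is a plan, not a proof, exactly at the step that carries the content.

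The missing idea --- and what the paper actually does --- makes the propagation essentially one line and needs no induction on $k$ and no disjointness analysis between triples. Once the first triple is in Subcase~\ref{subcase3}, the exact count $2hk-h+1=2(hk-h+1)+(h-1)$ forces $h_{\underline{+}}A$ to be \emph{precisely} the union of (\ref{eqn1}), (\ref{eqn2}), and the $h-1$ multiples $-(h-2)a_0,-(h-4)a_0,\ldots,(h-4)a_0,(h-2)a_0$, all of absolute value less than $ha_0$. Now fix $1\leq i\leq k-2$ and consider the signed $h$-fold summand $a_{i-1}+(h-2)a_i+a_{i+1}$. It lies strictly inside the open interval $\bigl(a_{i-1}+(h-1)a_i,\,(h-1)a_i+a_{i+1}\bigr)$, whose endpoints are entries of (\ref{eqn1}) with only $ha_i$ listed between them; the interval lies above $a_0+(h-1)a_1>ha_0$, so it contains none of the multiples of $a_0$, and every element of (\ref{eqn2}) is negative. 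Hence the only element of $h_{\underline{+}}A$ available in that interval is $ha_i$, forcing $a_{i-1}+(h-2)a_i+a_{i+1}=ha_i$, i.e.\ $a_{i+1}-a_i=a_i-a_{i-1}$ for every $i$. Combined with $a_1-a_0=2a_0$ this yields $A=a_0*\{1,3,\ldots,2k-1\}$. This is the same squeeze argument already used in Theorem~\ref{thm2}; adopting it closes your gap immediately, whereas completing your triple-by-triple case analysis would require substantially more work to be rigorous.
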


\begin{proof}
Let $A=\{a_0, a_1,\ldots, a_{k-1}\}$, where $0 < a_{0} < a_{1} < \cdots < a_{k-1}$.
Since $|h_{\underline{+}}A|=2hk-h+1$, it follows from Theorem \ref{thm3}, that $a_{2}-a_{1}=a_{1}-a_{0}=d=2a_{0}$. Again, by the similar argument used in Theorem \ref{thm2}, we get, for each $i=1, 2, \ldots, k-2$

\[a_{i-1}+(h-1)a_{i} < ha_{i} < (h-1)a_{i}+a_{i+1},\]
and
 \[a_{i-1}+(h-1)a_{i} < a_{i-1}+(h-2)a_{i}+a_{i+1} < (h-1)a_{i}+a_{i+1}.\]
Thus,
 \[ha_{i}=a_{i-1}+(h-2)a_{i}+a_{i+1}.\]
 This is equivalent to \[a_{i+1}-a_{i}=a_{i}-a_{i-1}.\]
 Therefore, the set $A$ is in arithmetic progression, and hence \[A=d*\{1,3,\ldots, 2k-1\}.\]
This completes the proof of the theorem.
\end{proof}

\subsection{$A$ contains nonnegative integers with $0\in A$}\label{subsec2.2}

\begin{theorem}\label{thm5}
Let $h\geq 1$. Let $A$ be a finite set of $k$ nonnegative integers with $0\in A$. Then
\begin{equation}\label{eqn7}
|h_{\underline{+}}A| \geq 2hk-2h+1.
\end{equation}
This lower bound is best possible.
\end{theorem}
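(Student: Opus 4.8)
\textbf{Proof proposal for Theorem \ref{thm5}.}

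The plan is to reduce the case $0 \in A$ to the all-positive case already handled in Theorem \ref{thm1}. Write $A = \{0, a_1, a_2, \ldots, a_{k-1}\}$ with $0 < a_1 < \cdots < a_{k-1}$, and let $A' = \{a_1, \ldots, a_{k-1}\} = A \setminus \{0\}$, a set of $k-1$ positive integers. The key observation is that the signed sumset of $A$ contains all signed sumsets $j_{\underline{+}}A'$ for $j \le h$ of the correct parity, since any coefficient assigned to $0$ contributes nothing to the sum while still counting toward the total $\sum |\lambda_i| = h$. Concretely, I expect the cleanest route is to exhibit explicitly, as in the proof of Theorem \ref{thm1}, a chain of $2hk - 2h + 1$ distinct integers in $h_{\underline{+}}A$. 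Because $0 \in A$, we may use up to $h$ copies of $0$, so every element of $h'_{\underline{+}}A'$ with $h' \le h$ and $h' \equiv h \pmod 2$ lies in $h_{\underline{+}}A$; in particular $0 \in h_{\underline{+}}A$ when there is an even budget to spend on a single $a_i$ minus itself (i.e. whenever $h \ge 2$, and trivially the center can be reached).

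The counting itself should mirror equations (\ref{eqn1}) and (\ref{eqn2}). First I would list the positive summands obtained from $A'$ as in (\ref{eqn1}): the increasing chain from $ha_1$ up through $ha_{k-1}$ built out of $a_1, \ldots, a_{k-1}$ alone gives $h(k-1) - h + 1 = hk - 2h + 1$ positive integers (applying the count of Theorem \ref{thm1} to the $(k-1)$-element set $A'$, whose positive half has that many terms). Their negatives supply another $hk - 2h + 1$ negative integers. Since $0 \in A$, the integer $0$ itself is a signed $h$-fold summand, giving one more element in the middle. Adding these together yields $2(hk - 2h + 1) + 1 = 2hk - 4h + 3$, which is short of the target, so I would instead fill the gap between $0$ and $ha_1$ (and symmetrically) using the freedom to trade copies of $0$ for copies of $a_1$: the summands $ja_1$ with $0 \le j \le h$, $j \equiv h \pmod 2$, interleaved with $(j)a_1$ patterns, supply the additional small positive multiples of the $a_i$ needed to reach exactly $hk - h$ positive values and $hk - h$ negative values, plus the central $0$, for a total of $2(hk - h) + 1 = 2hk - 2h + 1$.

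The main obstacle will be organizing the explicit list so that it is \emph{provably strictly increasing with no repetitions}, rather than merely counting candidate expressions. The delicate part is the transition region near $0$: I must verify that the short summands $ha_1, (h-1)a_1, \ldots$ obtained by spending part of the budget on $0$ do not collide with the summands coming from the $A'$-chain, and that the parity-$h$ values $0, (\text{smallest gaps})$ fit strictly between consecutive terms of (\ref{eqn1}). I would handle this exactly as in Theorem \ref{thm1}, by writing the full ordered chain
\[
0 < a_1 < 2a_1 < \cdots < ha_1 < (h-1)a_1 + a_2 < \cdots < ha_{k-1},
\]
(and its mirror image on the negative side through $0$), checking each inequality from $0 < a_1 < \cdots < a_{k-1}$, and then tallying. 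For the sharpness claim I would exhibit a witness set, the natural candidate being $A = \{0, 1, 2, \ldots, k-1\}$ or a dilated arithmetic progression containing $0$, and compute $h_{\underline{+}}A$ directly to confirm equality in (\ref{eqn7}); a progression $A = d * \{0, 1, \ldots, k-1\}$ gives $h_{\underline{+}}A = d * \{-h(k-1), \ldots, -1, 0, 1, \ldots, h(k-1)\}$, whose cardinality is exactly $2h(k-1) + 1 = 2hk - 2h + 1$, as required.
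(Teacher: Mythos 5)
Your proposal is correct and takes essentially the same route as the paper: with $a_0=0$ the chain $0 < a_1 < 2a_1 < \cdots < ha_1 < (h-1)a_1+a_2 < \cdots < ha_{k-1}$ (which is precisely the paper's list (\ref{eqn1}) specialized to $a_0=0$) and its mirror image give $hk-h$ positive terms, $hk-h$ negative terms, and the central $0$, and sharpness is witnessed by $A=[0,k-1]$ exactly as in the paper. One small remark: the parity restriction $h'\equiv h \pmod 2$ you impose is unnecessary (and your final chain rightly ignores it), since the coefficient assigned to $0$ absorbs any leftover budget $h-h'$ regardless of its parity.
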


\begin{proof}
Let $A=\{a_0, a_1,\ldots, a_{k-1}\}$, where $0 = a_{0} < a_{1} < \cdots < a_{k-1}$.
From (\ref{eqn1}) and (\ref{eqn2}), it is clear that $h_{\underline{+}}A$ contains at least $hk-h$ positive integers $(h-i)a_{j}+ia_{j+1}$ and $hk-h$ negative integers $-(h-i)a_{j}-ia_{j+1}$, for $0 \leq i \leq h$, $ 1 \leq j \leq k-2$, and one extra integer zero. Thus,
\[|h_{\underline{+}}A| \geq 2hk-2h+1.\]

Next, we show that this lower bound is best possible. Let $A=\{0, 1, 2, \ldots,\\ k-1\}=[0,k-1]$. The smallest integer of $h_{\underline{+}}A$ is $-h(k-1)$ and the largest element of $h_{\underline{+}}A$ is $h(k-1)$. Therefore, \[h_{\underline{+}}A \subseteq [-h(k-1), h(k-1)].\]
So \[|h_{\underline{+}}A| \leq 2h(k-1)+1.\] This inequality together with (\ref{eqn7}), implies \[|h_{\underline{+}}A|=2hk-2h+1.\]
This completes the proof of the theorem.
\end{proof}

\begin{theorem}\label{thm6}
Let $h \geq 2$ and let $A$ be a finite set of $k$ nonnegative integers with $0\in A$. Then $|h_{\underline{+}}A|=2hk-2h+1$ if and only if $A=d*[0,k-1]$, for some positive integer $d$.
\end{theorem}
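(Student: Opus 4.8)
The plan is to treat the two implications separately, the substantive one being the ``only if'' direction, which will closely parallel the betweenness argument used in Theorem \ref{thm2} and Theorem \ref{thm4}.

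For the ``if'' direction I would use dilation invariance together with Theorem \ref{thm5}. Since $h_{\underline{+}}(d*A)=d*(h_{\underline{+}}A)$ and dilation preserves cardinality, it suffices to note that the set $[0,k-1]$ is exactly the example achieving equality in the proof of Theorem \ref{thm5}, so $|h_{\underline{+}}[0,k-1]|=2hk-2h+1$. Hence $|h_{\underline{+}}(d*[0,k-1])|=|h_{\underline{+}}[0,k-1]|=2hk-2h+1$ for every positive integer $d$.

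For the ``only if'' direction, write $A=\{a_0,\ldots,a_{k-1}\}$ with $0=a_0<a_1<\cdots<a_{k-1}$. The cases $k\le 2$ are immediate: for $k=1$ we have $A=\{0\}=d*[0,0]$, and for $k=2$ we have $A=\{0,a_1\}=a_1*[0,1]$, so the asserted form holds automatically. For $k\ge 3$, the hypothesis $|h_{\underline{+}}A|=2hk-2h+1$ combined with the counting in the proof of Theorem \ref{thm5} (the $hk-h$ positive integers of (\ref{eqn1}), the $hk-h$ negative integers of (\ref{eqn2}), and the value $0$) forces $h_{\underline{+}}A$ to consist of \emph{exactly} these listed integers and nothing more. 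I would then, for each $i$ with $1\le i\le k-2$, observe that $a_{i-1}+(h-1)a_i$, $ha_i$, $(h-1)a_i+a_{i+1}$ are three consecutive terms of the chain (\ref{eqn1}), while $a_{i-1}+(h-2)a_i+a_{i+1}$ is a legitimate signed $h$-fold summand (of weight $1+(h-2)+1=h$, using $h\ge 2$) satisfying
\[a_{i-1}+(h-1)a_i < a_{i-1}+(h-2)a_i+a_{i+1} < (h-1)a_i+a_{i+1}.\]
Because $h_{\underline{+}}A$ contains no element strictly between these consecutive chain terms other than $ha_i$, we must have $a_{i-1}+(h-2)a_i+a_{i+1}=ha_i$, i.e.\ $a_{i+1}-a_i=a_i-a_{i-1}$.

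Since this holds for all $1\le i\le k-2$, the set $A$ is an arithmetic progression; as $a_0=0$ and $d:=a_1>0$ is a positive integer, this gives $A=\{0,d,2d,\ldots,(k-1)d\}=d*[0,k-1]$, which completes the proof. The one point requiring care is checking that $a_{i-1}+(h-2)a_i+a_{i+1}$ really lands strictly between its two neighbours in the chain; but the two strict inequalities above reduce respectively to $a_{i+1}>a_i$ and $a_{i-1}<a_i$, which hold by hypothesis. I expect this verification, together with confirming that equality forces $h_{\underline{+}}A$ to coincide exactly with the listed chain, to be the only delicate part. Notably, in contrast with Theorem \ref{thm2}, no additional step is needed to identify $d$, since the presence of $0=a_0$ makes every arithmetic progression of the required type automatically equal to $d*[0,k-1]$.
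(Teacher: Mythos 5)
Your proposal is correct and follows essentially the same route as the paper: the paper's proof also deduces from Theorem \ref{thm5} that equality forces $h_{\underline{+}}A$ to consist exactly of the integers listed in (\ref{eqn1}) and (\ref{eqn2}), and then invokes the same betweenness argument (citing Theorems \ref{thm2} and \ref{thm4}) to conclude $a_{i-1}+(h-2)a_i+a_{i+1}=ha_i$, hence that $A$ is an arithmetic progression starting at $0$. Your write-up is merely more explicit than the paper's, spelling out the ``if'' direction via dilation and the trivial cases $k\le 2$, which the paper leaves implicit.
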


\begin{proof}
Let $A=\{a_0, a_1,\ldots, a_{k-1}\}$, where $0 = a_{0} < a_{1} < \cdots < a_{k-1}$.
Since $|h_{\underline{+}}A|=2hk-2h+1$, it follows from Theorem \ref{thm5} that the sumset $h_{\underline{+}}A$ consists precisely the integers listed in equation (\ref{eqn1}) and (\ref{eqn2}). By the similar argument as used in Theorem \ref{thm2} and Theorem \ref{thm4}, we obtain that the set $A$ is an arithmetic progression. Hence $A=d*[0,k-1]$.
 This completes the proof of the theorem.
\end{proof}

\subsection{$A$ contains positive, negative and zero}\label{subsec2.3}

\begin{theorem}\label{thm7}
Let $h\geq 1$ and let $A$ be a finite set of $k$ integers contains both positive and negative integers. Then
\begin{equation}\label{eqn8}
|h_{\underline{+}}A| \geq hk-h+1.
\end{equation}
This lower bound is best possible.
\end{theorem}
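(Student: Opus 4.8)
The plan is to separate the two assertions, the lower bound and its sharpness, since only the second carries real content. The lower bound requires almost no new work. Every ordinary $h$-fold sum $\sum_i \lambda_i a_i$ with $\lambda_i \in \mathbb{N}_0$ and $\sum_i \lambda_i = h$ automatically satisfies $\sum_i |\lambda_i| = h$, so $hA \subseteq h_{\underline{+}}A$, exactly as recorded in the introduction. Hence $|h_{\underline{+}}A| \geq |hA|$, and Theorem \ref{thmA} gives $|hA| \geq hk - h + 1$ for $h \geq 2$. For the remaining value $h = 1$ one checks directly that $1_{\underline{+}}A = A \cup (-A) \supseteq A$, so $|1_{\underline{+}}A| \geq k = 1\cdot k - 1 + 1$. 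This establishes (\ref{eqn8}) for all $h \geq 1$; note that this half of the argument never uses the sign hypothesis on $A$, so it is the sharpness claim that carries all the content of the theorem.

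For sharpness the key observation I would isolate is that a symmetric set collapses the signed sumset onto the ordinary sumset. Precisely, if $A = -A$, then for any $x = \sum_i \lambda_i a_i \in h_{\underline{+}}A$ I would write each nonzero coefficient as $\lambda_i = \varepsilon_i |\lambda_i|$ with $\varepsilon_i \in \{\pm 1\}$ and rewrite $x = \sum_i |\lambda_i|(\varepsilon_i a_i)$. Because $\varepsilon_i a_i \in A$ and the nonnegative coefficients $|\lambda_i|$ sum to $h$, this exhibits $x$ as an ordinary $h$-fold sum, giving $h_{\underline{+}}A \subseteq hA$; together with the reverse inclusion above this yields $h_{\underline{+}}A = hA$ whenever $A$ is symmetric.

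It then remains to produce, for each $k \geq 2$, a symmetric $k$-term arithmetic progression containing integers of both signs, since Theorem \ref{thmB} will immediately give $|hA| = hk - h + 1$ for such an $A$. For odd $k$ I would take the balanced progression $A = \left[-\tfrac{k-1}{2}, \tfrac{k-1}{2}\right]$ centered at $0$; for even $k$ I would take $A = \{-(k-1), -(k-3), \ldots, -1, 1, \ldots, k-3, k-1\}$, the symmetric arithmetic progression of odd numbers with common difference $2$. Both sets equal their own negatives, both are $k$-term arithmetic progressions, and both contain positive and negative integers, so by the previous two paragraphs $|h_{\underline{+}}A| = |hA| = hk - h + 1$. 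I do not expect any serious obstacle here: the only genuinely substantive point is the symmetry-collapse identity $h_{\underline{+}}A = hA$ for $A = -A$, and the only place demanding a little care is separating the parities of $k$ so that the chosen progression is genuinely symmetric about the origin.
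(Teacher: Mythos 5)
Your proposal is correct, and its sharpness half takes a genuinely different route from the paper. For the lower bound the two arguments coincide in substance: the paper cites the chain (\ref{eqn1}), whose $hk-h+1$ entries are ordinary $h$-fold sums, so this is just the inclusion $hA \subseteq h_{\underline{+}}A$ made explicit (your separate treatment of $h=1$ is a reasonable point of care, since Theorem \ref{thmA} is stated only for $h\geq 2$, though the chain (\ref{eqn1}) handles $h=1$ directly). For sharpness, the paper takes $A=\left[-\lfloor \frac{k}{2}\rfloor, \lfloor \frac{k}{2}\rfloor\right]$ with $k\geq 3$ odd and simply counts: $h_{\underline{+}}A$ sits inside the interval $\left[-h\lfloor \frac{k}{2}\rfloor, h\lfloor \frac{k}{2}\rfloor\right]$, which contains exactly $hk-h+1$ integers, so the upper and lower bounds meet. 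That containment-counting argument is shorter than yours, but as written it exhibits tightness only for odd $k$, since a symmetric interval of integers necessarily has odd cardinality. Your symmetry-collapse lemma --- $h_{\underline{+}}A = hA$ whenever $A=-A$, proved by absorbing the sign of each coefficient $\lambda_i$ into the symmetric set and regrouping the nonnegative coefficients $|\lambda_i|$ --- is a more structural fact: combined with Theorem \ref{thmB} it produces extremal examples for every $k\geq 2$ of either parity, and it is exactly the converse of the paper's Theorem \ref{thm8}, which says that any extremal set must be a symmetric arithmetic progression. So your route costs one extra lemma but buys a cleaner and more complete statement (symmetric arithmetic progressions are precisely the extremal sets) rather than a single family of examples valid for odd $k$ only.
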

\begin{proof}
The lower bound is trivial and it follows from (\ref{eqn1}). To see that the lower bound is optimal, consider the interval of integers $A=\left[-\lfloor \frac{k}{2} \rfloor, \lfloor \frac{k}{2} \rfloor\right ]$, where $k\geq 3$ is an odd integer. Then,
\[h_{\underline{+}}A \subseteq \left [-h\lfloor \frac{k}{2} \rfloor, h\lfloor \frac{k}{2} \rfloor\right ].\]
Thus, \[|h_{\underline{+}}A| \leq 2h\lfloor \frac{k}{2} \rfloor+1=(k-1)h+1=hk-h+1.\]
This inequality together with (\ref{eqn8}) gives $|h_{\underline{+}}A|=hk-h+1$. This completes the proof of the theorem.
\end{proof}

\begin{theorem}\label{thm8}
Let $A$ be a finite set of $k\geq 2$ integers. Let $|h_{\underline{+}}A|=hk-h+1$. Then $A$ is a symmetric set and it is an arithmetic progression.
\end{theorem}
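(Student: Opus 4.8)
The plan is to reduce the whole statement to facts about the ordinary $h$-fold sumset and then quote Theorem \ref{thmB}. I will work with $h\geq 2$, which is needed to apply Theorems \ref{thmA} and \ref{thmB}; note that for $h=1$ the arithmetic-progression conclusion genuinely fails (e.g. $A=\{-5,-1,1,5\}$ is symmetric with $\lvert 1_{\underline{+}}A\rvert=4=hk-h+1$ but is not an arithmetic progression), so the intended hypothesis is $h\geq 2$. Write $A=\{a_0,\dots,a_{k-1}\}$ with $a_0<\cdots<a_{k-1}$, and recall from the introduction that $hA\cup h(-A)\subseteq h_{\underline{+}}A$ and $h(-A)=-(hA)$.

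The first and decisive step is to promote the cardinality hypothesis to a set equality. Since $hA\subseteq h_{\underline{+}}A$, Theorem \ref{thmA} gives $hk-h+1\leq \lvert hA\rvert\leq \lvert h_{\underline{+}}A\rvert=hk-h+1$, whence $\lvert hA\rvert=hk-h+1$ and, crucially, $hA=h_{\underline{+}}A$. As a consistency remark, this already forces $A$ to contain both a positive and a negative integer, since an all-nonnegative or all-nonpositive $A$ would, by Theorems \ref{thm1} and \ref{thm5}, make $\lvert h_{\underline{+}}A\rvert$ strictly exceed $hk-h+1$ for $k\geq 2$.

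Next I would read off the two conclusions independently from this one equality. For the arithmetic-progression claim, $\lvert hA\rvert=hk-h+1$ together with Theorem \ref{thmB} says at once that $A$ is a $k$-term arithmetic progression, say $A=\{a_0+jd:0\leq j\leq k-1\}$ with $d>0$. For the symmetry claim, I would use the otherwise-unused inclusion $-(hA)=h(-A)\subseteq h_{\underline{+}}A=hA$; because negation is a cardinality-preserving bijection, this inclusion between equal-sized finite sets forces $hA=-(hA)$, i.e. $hA$ is symmetric about the origin. Finally, since $A$ is an arithmetic progression so is $hA=\{ha_0+jd:0\leq j\leq h(k-1)\}$, whose center $\tfrac{1}{2}(ha_0+ha_{k-1})$ equals $h$ times the center of $A$; a symmetric arithmetic progression has center $0$, so the center of $A$ vanishes and therefore $A=-A$.

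The conceptual crux is the single observation that the hypothesis upgrades an inequality of cardinalities into the set equality $hA=h_{\underline{+}}A$: this one equality simultaneously delivers the AP structure (through Theorem \ref{thmB}) and the symmetry (through the inclusion $h(-A)\subseteq h_{\underline{+}}A$). Everything after that is routine, the only elementary points being that a finite arithmetic progression $S$ satisfies $S=-S$ precisely when its center is $0$, and that passing from $A$ to $hA$ multiplies the center by $h$; I do not expect either to present any real obstacle once the set equality is established.
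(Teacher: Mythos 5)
Your proposal is correct, and its decisive step is exactly the paper's: both upgrade the cardinality hypothesis to the set equality $hA=h_{\underline{+}}A$ via Theorem \ref{thmA} and then invoke Theorem \ref{thmB} to get the arithmetic-progression conclusion. You diverge only in how symmetry is extracted. The paper notes that the increasing listing (\ref{eqn1}) of $hA$ and the increasing listing (\ref{eqn2}) of $h(-A)$ each consist of $hk-h+1$ distinct elements of the $(hk-h+1)$-element set $h_{\underline{+}}A$, so both are complete sorted listings of it; matching them position by position gives $ha_i=-ha_{k-1-i}$, hence $a_i=-a_{k-1-i}$, directly and without using the AP structure. You instead deduce $hA=-(hA)$ from the negation bijection and the inclusion $h(-A)\subseteq h_{\underline{+}}A$, and then pass from symmetry of $hA$ to symmetry of $A$ through the AP center argument, so your symmetry conclusion leans on Theorem \ref{thmB} while the paper's does not. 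Both routes are valid and of comparable length. A genuine merit of your write-up is flagging the hypothesis $h\geq 2$: the paper states the theorem with no restriction on $h$, yet its proof (like yours) uses Theorems \ref{thmA} and \ref{thmB}, which are stated for $h\geq 2$, and your example $A=\{-5,-1,1,5\}$ with $h=1$ shows the arithmetic-progression conclusion genuinely fails there (only the symmetry claim survives, as the paper's listing-matching argument would still show).
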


\begin{proof}
Let $A=\{a_0, a_1,\ldots, a_{k-1}\}$, where $a_{0} < a_{1} < \cdots < a_{k-1}$. Let $|h_{\underline{+}}A|=hk-h+1$. Since $hA \subseteq h_{\underline{+}}A$, Theorem \ref{thmA} implies that $hA=h_{\underline{+}}A$. Thus, by Theorem \ref{thmB} the set $A$ is in arithmetic progression. Again, since $|h_{\underline{+}}A|=hk-h+1$, the sumset $h_{\underline{+}}A$ contains precisely that $(hk-h+1)$ integers listed in (\ref{eqn1}). It also contains the $(hk-h+1)$ integers listed in (\ref{eqn2}). Thus, for all $i=0,1,\ldots,k-1$, we have
\[ha_{i}=-ha_{k-1-i}.\]
 This is equivalent to $a_{i}= -a_{k-1-i}$, for all $i=0,1,\ldots,k-1$. This completes the proof of the theorem.
\end{proof}

\section*{Acknowledgements} The first author would like to thank to the Indian Institute of Technology Roorkee for providing the grant to carry out the research with Grant No. MAT/FIG/100656.



\section*{References}
\bibliographystyle{model1-num-names}
\bibliography{sample.bib}

\end{document}